\newcommand{\R}{\mathbb{R}}
\newcommand{\egaldef}{:=} % egalite definissant la quantite de gauche
\newcommand{\defegal}{=:} % egalite definissant la quantite de droite
\newcommand{\parenj}[1]{\mathopen{}\left( #1  \right) \mathclose{}}
\newcommand{\parens}[1]{( #1 )}
\newcommand{\parenb}[1]{\bigl( #1 \bigr)}
\newcommand{\croch}[1]{\left[ #1 \right]}
\newcommand{\crochj}[1]{\mathopen{}\left[ #1 \right] \mathclose{}}
\newcommand{\setj}[1]{\mathopen{}\left\{ #1 \right\} \mathclose{}}
\newcommand{\sets}[1]{\{ #1 \}}
\newcommand{\setb}[1]{\bigl\{ #1 \bigr\}}
\newcommand{\absb}[1]{\bigl\lvert #1 \bigr\rvert}
\newcommand{\norm}[1]{\left \lVert #1 \right\rVert}
\newcommand{\norms}[1]{\lVert #1 \rVert}
\newcommand{\normb}[1]{\bigl \lVert #1 \bigr\rVert}
\newcommand{\E}{\mathbb{E}} %esperance
\DeclareMathOperator{\tr}{tr}
\DeclareMathOperator{\card}{card}
\DeclareMathOperator{\tmpargmin}{argmin}
\newcommand{\argmin}{\mathop{\tmpargmin}}
\DeclareMathOperator{\tmpargmax}{argmax}
\newcommand{\argmax}{\mathop{\tmpargmax}}
\newcommand{\Id}{I}
\newcommand{\Fh}{\widehat{F}}
\newcommand{\M}{\mathcal{M}}
\newcommand{\mM}{m \in \M}
\newcommand{\C}{\mathcal{C}}
\newcommand{\Fhm}{\Fh_m}
\newcommand{\sh}{\widehat{s}}
\newcommand{\shm}{\sh_m}
\newcommand{\mh}{\widehat{m}}
\newcommand{\Ch}{\widehat{C}}
\newcommand{\Chjumpgal}{\Ch_{\mathrm{jump}}}% Cte correspondant a "un saut" (en general)
\newcommand{\Chwindow}{\Ch_{\mathrm{window}}}% Cas particulier "geometric window"
\newcommand{\Chslope}{\Ch_{\mathrm{slope}}}% Cte obtenue via "la pente" (en general)
\DeclareMathOperator{\pen}{pen}%penalite
\DeclareMathOperator{\crit}{crit}%critere empirique
\newcommand{\penmin}{\pen_{\min}} % penalite minimale
\newcommand{\opt}{\mathrm{opt}}
\newcommand{\penopt}{\pen_{\opt}}
\newcommand{\penoptz}{\pen_{\opt,0}}
\newcommand{\etamoins}{\eta_n^{-}}
\newcommand{\etaplus}{\eta_n^{+}}
\newcommand{\biaismax}{\mathcal{B}}
\newtheorem{theorem}{Theorem}
\renewcommand{\leq}{\leqslant}
\renewcommand{\geq}{\geqslant}
\newcommand{\refsurvey}[1]{\textbf{#1}}
\begin{document}

\begin{frontmatter}

  \title{Rejoinder on: Minimal penalties and the slope heuristics: a survey}
  \runtitle{Rejoinder on: Minimal penalties and the slope heuristics: a survey}
%%  \alttitle{R\'eponse sur: P\'enalit\'es minimales et heuristique de pente}

  \begin{aug}
    \auteur{%
      \prenom{Sylvain} \nom{Arlot}%
      \thanksref{t1}%
      \contact[label=e1]{sylvain.arlot@u-psud.fr}%
    }%
\affiliation[t1]{%
Universit\'e Paris-Saclay, Univ.\@ Paris-Sud, CNRS, Inria, Laboratoire de math\'ematiques d'Orsay, 91405, Orsay, France.
\printcontact{e1}
}
    \runauthor{S. Arlot}
  \end{aug}

\end{frontmatter}
  
\setcounter{footnote}{0} % to set manually (equal to the number of institutes minus one)

%%%%%%%%%%%%%%%%%%%%%%%%%%%%%%%%%%%%%%%%%%%%%%%%%%%%%%%%%%%%%%%%%%%%%%%%%%%%%%%%%%%%
%%%%%%%%%%%%%%%%%%%%%%%%%%%%%%%%%%%%%%%%%%%%%%%%%%%%%%%%%%%%%%%%%%%%%%%%%%%%%%%%%%%%

I am grateful to the Editor-in-Chief for arranging such an informative and stimulating discussion of my survey paper \citep{Arl:2019:surveypenmin:journal}, 
and to all discussants for their inspiring comments. 
The discussion nicely complements the paper on various aspects. 
Let me add a few comments on these, following approximately the order of the paper. 

For the sake of clarity, references to the survey paper are made with bolded numbers.

\section{Slope heuristics when there is some model bias (Sections \refsurvey{2} and~\refsurvey{8.2})}

Gilles Celeux and Christine Keribin ask whether it is possible to 
justify the slope heuristics when \emph{all} models are biased, 
assuming that the bias is constant over the large models. 
The answer is positive. 
For instance, modifying slightly the proof of Theorem~\refsurvey{1} in Section~\refsurvey{2}, 
we obtain the following result. 
\begin{theorem} \label{thm.OLS.with-bias}
In the framework described in Section~\refsurvey{2.1} of the survey, 
assume that $\M$ is finite and that  
\begin{equation}
\label{hyp.thm.OLS.Gauss}
\tag{\ensuremath{\mathbf{HG}}}
\varepsilon \sim \mathcal{N}(0,\sigma^2 \Id_n)
\, . 
\end{equation}
Let $m_1 \in \M$ be such that 
\begin{equation} 
\label{eq.thm.OLS.with-bias.hyp}
B \egaldef \min_{m \in \M} \setj{ \frac{1}{n} \normb{ (\Id_n - \Pi_{m}) F}^2 } = \frac{1}{n} \normb{ (\Id_n - \Pi_{m_1}) F}^2 
\end{equation}
and assume that $m_2 \in \M$ exists such that $D_{m_2} \leq D_{m_1} / 20$. 
Recall that for every $C \geq 0$, $\mh(C)$ is defined by Eq.~\refsurvey{(10)} of the survey. 
Then, for every $\gamma \geq 0$, some $n_0(\gamma)$ exists such that if $n \geq n_0(\gamma)$, 
with probability at least $1 - 4 \card(\M) n^{-\gamma}$, 
the following inequalities hold simultaneously\textup{:}
\begin{align}
\label{eq.thm.OLS.with-bias.Cpt-Dgrd}
\forall C \leq \parenb{ 1 - \etamoins } \sigma^2
\, ,
\qquad 
D_{\mh(C)} &\geq \frac{9 D_{m_1}}{10} 
\, , 
\\
\label{eq.thm.OLS.with-bias.Cgrd-Dpt}
\text{and} \qquad 
\forall C \geq \parenb{ 1 + \etaplus } \sigma^2
\, ,
\qquad 
D_{\mh(C)} &\leq \frac{D_{m_1}}{10}
\, , 
\end{align} 
where 
\begin{align*}
\sigma^2 \etamoins 
&= \frac{n}{D_{m_1}}  \parenb{ 
57 %%% 40 \sqrt{2} = 56.568 < 57
\sigma \sqrt{B}  
+ 41 \sigma^2 } \sqrt{\frac{\gamma \log(n)}{n}} 
\, , 
\\ 
\sigma^2 \etaplus 
&= \frac{n}{D_{m_1}} \crochj{ 
20 ( B' - B ) 
+ \parenb{ 114 %%% 80 \sqrt{2} = 113.1371 < 114 
\sigma \sqrt{ B' } 
+ 82 %%% 81.5 < 80
\sigma^2 }
\sqrt{\frac{\gamma \log(n)}{n}} 
}
\, , 
\\ 
\text{and} \qquad 
B' 
&\egaldef  \biaismax \parenj{ \frac{D_{m_1}}{20}} 
= \inf_{\mM \,/\, D_m \leq D_{m_1}/20} \setj{\frac{1}{n} \normb{ (\Id_n - \Pi_m) F}^2 } 
\, .
\end{align*}
On the same event, Eq. \refsurvey{(17)} and~\refsurvey{(18)} of the survey also hold true. 
\end{theorem}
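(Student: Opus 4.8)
\emph{The plan is to follow the proof of Theorem~\refsurvey{1} almost verbatim, the one genuinely new ingredient being the bias--noise cross term, which is identically zero in the unbiased setting but here drives the appearance of $\sqrt{B}$ and $\sqrt{B'}$.} Writing the data as $Y = F + \varepsilon$ and $b_m \egaldef \frac1n\normb{(\Id_n - \Pi_m)F}^2$, I would first expand the criterion defining $\mh(C)$ in Eq.~\refsurvey{(10)} and discard the model-independent term $\frac1n\normb{\varepsilon}^2$: minimizing it over $\mM$ is then equivalent to minimizing
\[
\crit_0(m,C) \egaldef b_m + \frac{2}{n}\prodscal{(\Id_n - \Pi_m)F}{\varepsilon} - \frac{1}{n}\normb{\Pi_m\varepsilon}^2 + C\frac{D_m}{n} ,
\]
whose expectation equals $b_m + (C - \sigma^2)\frac{D_m}{n}$. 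This already exhibits the threshold $C = \sigma^2$ and the bias correction, and reduces everything to controlling the two centred fluctuations $\prodscal{(\Id_n - \Pi_m)F}{\varepsilon}$ and $\normb{\Pi_m\varepsilon}^2 - \sigma^2 D_m$ uniformly over the finite family $\M$.

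Under~\eqref{hyp.thm.OLS.Gauss}, $\normb{\Pi_m\varepsilon}^2 / \sigma^2$ is a $\chi^2$ variable with $D_m$ degrees of freedom, while $\prodscal{(\Id_n - \Pi_m)F}{\varepsilon}$ is centred Gaussian with variance $\sigma^2 n\, b_m$. I would apply the Laurent--Massart deviation inequalities (both tails) to the former and the Gaussian tail bound (both tails) to the latter, each at level $x = \gamma\log n$, and take a union bound over $\mM$. On the resulting event $\Omega$, of probability at least $1 - 4\,\card(\M)\,n^{-\gamma}$ (the factor $4$ collecting the two tails of each of the two families), one has simultaneously for every $\mM$
\[
\absb{\,\normb{\Pi_m\varepsilon}^2 - \sigma^2 D_m} \leq 2\sigma^2\sqrt{D_m\,\gamma\log n} + 2\sigma^2\gamma\log n \quad\text{and}\quad \frac{2}{n}\absb{\prodscal{(\Id_n - \Pi_m)F}{\varepsilon}} \leq 2\sigma\sqrt{b_m}\,\sqrt{\tfrac{2\gamma\log n}{n}} .
\]

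Both conclusions then follow by showing, on $\Omega$, that a well-chosen reference model strictly beats every wrongly-sized competitor. For~\eqref{eq.thm.OLS.with-bias.Cpt-Dgrd}, fix $C \leq (1 - \etamoins)\sigma^2$ and any $m$ with $D_m < 9 D_{m_1}/10$, and subtract $\crit_0(m_1, C)$; using $b_m \geq B = b_{m_1}$, the gap $D_{m_1} - D_m > D_{m_1}/10$ and the bounds above, the deterministic gain $(\sigma^2 - C)(D_{m_1} - D_m)/n \geq \etamoins\sigma^2 D_{m_1}/(10 n)$ dominates all fluctuations. The only model-dependent nuisance, $2\sigma\sqrt{b_m}\,\sqrt{2\gamma\log n/n}$, I would split through $\sqrt{b_m} \leq \sqrt{b_m - B} + \sqrt{B}$ and absorb its $\sqrt{b_m - B}$ part into $b_m - B \geq 0$ by Young's inequality, leaving a clean $\sqrt{B}$ term; re-expressing every remaining fluctuation in the common scale $\frac{n}{D_{m_1}}\sqrt{\gamma\log n/n}$ (legitimate since $D_{m_1} \leq n$) produces the constants $57$ and $41$ of $\etamoins$, so $\crit_0(m, C) > \crit_0(m_1, C)$ and $D_{\mh(C)} \geq 9 D_{m_1}/10$. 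For~\eqref{eq.thm.OLS.with-bias.Cgrd-Dpt}, since $\set{m : D_m \leq D_{m_1}/20}$ is nonempty (it contains $m_2$) and $\M$ is finite, let $m_3$ attain $B' = \biaismax(D_{m_1}/20)$; fix $C \geq (1 + \etaplus)\sigma^2$ and any $m$ with $D_m > D_{m_1}/10$, and subtract $\crit_0(m_3, C)$. Here the new obstruction is $b_m - b_{m_3} \geq B - B' = -(B' - B)$, which is cancelled by the gain $(C - \sigma^2)(D_m - D_{m_3})/n \geq \etaplus\sigma^2 D_{m_1}/(20 n)$: the factor $20$ in $\etaplus$ is precisely what turns this into $(B' - B)$ plus a fluctuation budget, the residual terms (now carrying $\sqrt{B'}$ through $m_3$, with $\sqrt{b_m}$ again tamed by a Young step via $\sqrt{b_m} \leq \sqrt{\absb{b_m - B'}} + \sqrt{B'}$) yielding the constants $114$ and $82$ (twice those of $\etamoins$, reflecting the gap $1/20$ rather than $1/10$). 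Thus $D_{\mh(C)} \leq D_{m_1}/10$.

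The hard part is genuinely the cross term $\prodscal{(\Id_n - \Pi_m)F}{\varepsilon}$: it is the mechanism by which the bias enters $\etamoins$ and $\etaplus$, and the delicate point is to peel off its unavoidable $\sqrt{B}$, $\sqrt{B'}$ contributions while absorbing the truly $m$-dependent $\sqrt{b_m}$ part into the bias differences without spoiling the constants, and to keep the $B' - B$ bookkeeping consistent with the dimension gap $1/20$. Finally, Eqs.~\refsurvey{(17)} and~\refsurvey{(18)} are derived in the survey from these same concentration inequalities alone, so once the bias terms are tracked they hold verbatim on $\Omega$; the sole role of $n_0(\gamma)$ is to place us in the regime where the Gaussian and $\chi^2$ deviation bounds are valid and the neglected second-order terms are dominated.
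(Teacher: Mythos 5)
Your proposal follows essentially the same route as the paper's proof. The quantity you call $\crit_0(m,C)$ and your event $\Omega$ are exactly what the survey's proof of Theorem~\refsurvey{1} calls $G_C(m)$ and $\Omega_x$: the paper simply re-invokes the survey's Eq.~\refsurvey{(25)}, which is precisely the combination of Laurent--Massart and Gaussian-tail bounds with a union bound that you reconstruct (same failure probability $4\card(\M)n^{-\gamma}$, same per-model bound $2\sigma^2(\sqrt{x/n}+x/n)+2\sigma\sqrt{2x/n}\,\sqrt{b_m}$). Your two comparisons are the paper's steps 2.2' and 2.3' with the same reference models ---your $m_3$ is exactly the paper's $m_2$, which the proof takes to be a minimizer of the bias over $\sets{m : D_m \leq D_{m_1}/20}$--- and your first-part splitting $\sqrt{b_m}\leq\sqrt{b_m-B}+\sqrt{B}$ followed by Young's inequality is identical to the paper's elementary bound $u - 2\alpha\sqrt{u} \geq B - 2\alpha\sqrt{B} - \alpha^2$ for $u\geq B\geq 0$.

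There is, however, one step in your second comparison that would not deliver the theorem as stated: you anchor the competitor's cross term at $B'$, via $\sqrt{b_m}\leq\sqrt{\abss{b_m-B'}}+\sqrt{B'}$. For competitors with $b_m<B'$, Young's inequality gives $-2\alpha\sqrt{B'-b_m}\geq -\alpha^2-(B'-b_m)$, and the term $-(B'-b_m)$ must be paid out of $b_m$ itself, so the best uniform bound this yields is $b_m-2\alpha\sqrt{b_m}\geq 2b_m-B'-\alpha^2-2\alpha\sqrt{B'}\geq 2B-B'-\alpha^2-2\alpha\sqrt{B'}$, which loses an extra $B'-B$ compared with the anchor at $B$. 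Carrying this through the comparison with $m_3$ forces $C\geq\sigma^2+\frac{20n}{D_{m_1}}\crochb{2(B'-B)+\cdots}$, i.e.\ you would prove Eq.~\eqref{eq.thm.OLS.with-bias.Cgrd-Dpt} only with $40(B'-B)$ in place of the claimed $20(B'-B)$ in $\etaplus$. The fix is what the paper does (and what you yourself do in the first part): bound the competitor's term by $u-2\alpha\sqrt{u}\geq B-2\alpha\sqrt{B}-\alpha^2$ again, so that $\sqrt{B'}$ enters only through the reference model $m_3$, and use $\sqrt{B}\leq\sqrt{B'}$ only at the very end to merge the two cross-term contributions $2\alpha\parenb{\sqrt{B}+\sqrt{B'}}\leq 4\alpha\sqrt{B'}$ into the single constant $114$. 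With that correction, your argument and the paper's coincide, including the constants and the role of $n_0(\gamma)$ and of step~3 for Eq.~\refsurvey{(17)}--\refsurvey{(18)}.
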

\begin{proof}
We adapt the proof of Theorem~\refsurvey{1} of the survey. 
We keep unchanged the definition of $n_0(\gamma)$, step~1 and step~2.1 of the proof, 
as well as step~3 which still applies for proving Eq. \refsurvey{(17)}--\refsurvey{(18)}. 
Only steps 2.2 and 2.3 need to be modified, as detailed below. 
%%Throughout the rest of the proof, we take $x = \gamma \log(n)$ as in the proof of Theorem~\refsurvey{1}. 
%
Since we use it repeatedly in the proof, 
let us recall Eq.~\refsurvey{(25)} of the survey, 
which holds true on the event $\Omega_x$ 
considered in the proof of Theorem~\refsurvey{1}: 
\begin{align}
\tag{\refsurvey{25}} \label{R.eq.GCM-critC}
\forall m\in \M \, , \qquad 
\absb{G_C(m) - \crit_C(m) }
&\leq 2 \sigma^2 \parenj{ \sqrt{ \frac{x}{n} } + \frac{x}{n} }  + \frac{2 \sigma \sqrt{2x}}{n} \normb{(\Id_n- \Pi_m) F}
\\ 
\tag{Eq.~(\refsurvey{11})}
\text{where} \qquad 
 \crit_C(m) &\egaldef \frac{1}{n} \crochj{ \normb{ (\Id_n - \Pi_m) F}^2 + ( C - \sigma^2) D_m }
\, .
\end{align}

%%%%%%%%%%%%%%%%%%%%%%%%%%%%%%%%%%%%%%%%%%%%%%%%
\paragraph{Step 2.2': lower bound on $D_{\mh(C)}$ when $C$ is too small (proof of Eq.~\eqref{eq.thm.OLS.with-bias.Cpt-Dgrd})}
Let $C \in [0,\sigma^2)$. 
Since $\mh(C)$ minimizes $G_C(m)$ over $\mM$, it is sufficient to prove that if $C \leq \parens{ 1 - \etamoins } \sigma^2$, 
\begin{equation}
\label{eq.pr.thm.OLS.with-bias.Cpt-Dgrd.but}
G_C(m_1) < \inf_{\mM , \, D_m < 9 D_{m_1} / 10} \setb{G_C(m)} \, . 
\end{equation}
On the one hand, by Eq. \eqref{R.eq.GCM-critC} and~\eqref{eq.thm.OLS.with-bias.hyp}, 
\begin{align}
G_C(m_1)
&\leq 
\crit_C(m_1) + 2 \sigma^2 \parenj{ \sqrt{ \frac{x}{n} } + \frac{x}{n} } 
+ 2 \sigma \sqrt{ \frac{2 x}{n} } \sqrt{B}  
\notag 
\\ 
&= B + (C-\sigma^2) \frac{D_{m_1}}{n} + 2 \sigma^2 \parenj{ \sqrt{ \frac{x}{n} } + \frac{x}{n} } 
+ 2 \sigma \sqrt{ \frac{2 x}{n} } \sqrt{B}  
\label{eq.pr.thm.OLS.with-bias.Cpt-Dgrd.1}
\, .
\end{align}
On the other hand, by Eq.~\eqref{R.eq.GCM-critC}, for any $\mM$ such that $D_m < 9 D_{m_1} / 10$,
\begin{align}
G_C(m)
&\geq
\frac{(C-\sigma^2) D_m}{n}
- 2 \sigma^2 \parenj{ \sqrt{ \frac{x}{n} } + \frac{x}{n} }
+ \frac{1}{n} \normb{(\Id_n - \Pi_m) F}^2 - \frac{2\sigma \sqrt{2x}}{n} \normb{(\Id_n - \Pi_m) F}
\notag
\\
&> \frac{9 D_{m_1}}{10 n} (C-\sigma^2)
- 2 \sigma^2 \parenj{ \sqrt{ \frac{x}{n} } + \frac{x}{n} } 
+ B - 2\sigma \sqrt{\frac{2x}{n} } \sqrt{B}  
- \frac{2 \sigma^2 x}{n}
\, , 
\label{eq.pr.thm.OLS.with-bias.Cpt-Dgrd.2}
\end{align}
using that 
$u - 2 \alpha \sqrt{u} 
\geq B - 2 \alpha \sqrt{B} - \alpha^2$ 
when $u \geq B \geq 0$, 
%%% proof: 
%%% (\sqrt{u}-\sqrt{B})^2 + 2 (\sqrt{u}-\sqrt{B}) (\sqrt{B} - \alpha) + \alpha^2 \geq (\sqrt{u}-\sqrt{B})^2 - 2 (\sqrt{u}-\sqrt{B}) \alpha + \alpha^2 = (\sqrt{u}-\sqrt{B} - \alpha)^2
with
\begin{equation*}
\alpha = \sigma \sqrt{\frac{2x}{n} }
\qquad \text{and} \qquad 
u = \frac{1}{n} \normb{(\Id_n - \Pi_m) F}^2 \geq  B
\quad 
\text{by Eq.~\eqref{eq.thm.OLS.with-bias.hyp}.} 
%%\, . 
\end{equation*}
To conclude, the upper bound in Eq.~\eqref{eq.pr.thm.OLS.with-bias.Cpt-Dgrd.1} is smaller than the lower bound in Eq.~\eqref{eq.pr.thm.OLS.with-bias.Cpt-Dgrd.2} when
\begin{equation}
\label{eq.pr.thm.OLS.with-bias.Cpt-Dgrd.3}
C \leq \sigma^2 -  \frac{20 n}{D_{m_1}} \parenj{ 2 \sigma^2 \sqrt{\frac{x}{n}} + 3 \sigma^2 \frac{x}{n} + 2 \sigma \sqrt{\frac{2x}{n}} \sqrt{B} }
\defegal \widetilde{C}'_1(x) \, .
\end{equation}
Taking $x=\gamma \log(n)$, for $n \geq n_0(\gamma)$, 
we have $\widetilde{C}'_1(x) \geq \sigma^2 (1 - \etamoins)$ hence Eq.~\eqref{eq.thm.OLS.with-bias.Cpt-Dgrd}.

%%%%%%%%%%%%%%%%%%%%%%%%%%%%%%%%%%%%%%%%%%%%%%%%
\paragraph{Step 2.3': upper bound on $D_{\mh(C)}$ when $C$ is large enough (proof of Eq.~\eqref{eq.thm.OLS.with-bias.Cgrd-Dpt})}
Let $C>\sigma^2$.
Similarly to the proof of Eq.~\eqref{eq.thm.OLS.with-bias.Cpt-Dgrd}, it is sufficient to prove that 
if $C \geq \parens{ 1 + \etaplus } \sigma^2$,
\begin{equation}
\label{eq.pr.thm.OLS.with-bias.Cpt-Dpt.but.alt2}
G_C(m_2) < \inf_{\mM \, , \, D_m > D_{m_1} / 10} \setb{ G_C(m) }
\end{equation}
where $m_2 \in \argmin_{\mM \,/\, D_m \leq D_{m_1}/20} \setb{ \norms{(\Id_n - \Pi_m) F}^2 }$ exists by assumption. 
On the one hand, by Eq.~\eqref{R.eq.GCM-critC}, 
\begin{align}
G_C(m_2)
&\leq \crit_C(m_2)
+ 2 \sigma^2 \parenj{ \sqrt{ \frac{x}{n} } + \frac{x}{n} } + \frac{2 \sigma \sqrt{2x}}{n} \normb{(\Id_n- \Pi_{m_2}) F}
\notag
\\
&\leq 
B' 
+ \frac{(C-\sigma^2) D_{m_1}}{20 n}
+ 2 \sigma^2 \parenj{ \sqrt{ \frac{x}{n} } + \frac{x}{n} }
+ 2 \sigma \sqrt{ \frac{2x}{n} } \sqrt{ B' }
\label{eq.pr.thm.OLS.with-bias.Cpt-Dpt.1.alt2}
\, .
\end{align}
On the other hand, as in step 2.2', 
using Eq.~\eqref{R.eq.GCM-critC} and Eq.~\eqref{eq.thm.OLS.with-bias.hyp}, 
we get that for any $\mM$ such that $D_m > D_{m_1}/10$,
\begin{align}
G_C(m)
%% NOTE: je supprime une etape de plus car c'est vraiment strictement identique au cas precedent (la seule difference est qu'ici C-\sigma^2 est positif, donc c'est bien une borne inf sur D_m qui nous sauve)
&> \frac{D_{m_1}}{10 n} (C-\sigma^2) - 2 \sigma^2 \parenj{ \sqrt{ \frac{x}{n} } + \frac{x}{n} } 
+ B - 2 \sigma \sqrt{ \frac{2x}{n} } \sqrt{B} 
- \frac{2 \sigma^2 x}{n}
\label{eq.pr.thm.OLS.with-bias.Cpt-Dpt.2.alt2}
\, . 
\end{align}
To conclude, 
the upper bound in Eq.~\eqref{eq.pr.thm.OLS.with-bias.Cpt-Dpt.1.alt2}
is  smaller than the lower bound in Eq.~\eqref{eq.pr.thm.OLS.with-bias.Cpt-Dpt.2.alt2}
when
\begin{equation}
\label{eq.pr.thm.OLS.with-bias.Cpt-Dpt.3}
C  \geq \sigma^2 + \frac{20 n}{D_{m_1}} \crochj{ 
B' - B 
+ 2 \sigma \sqrt{ \frac{2x}{n} } \parenb{ \sqrt{B} + \sqrt{B'} } 
+ 2 \sigma^2 \parenj{ 2 \sqrt{\frac{x}{n}} + \frac{3 x}{n} } } 
\defegal \widetilde{C}'_2(x)
\, .
\end{equation}
Taking $x = \gamma \log(n)$, for $n \geq n_0(\gamma)$, 
we have $\widetilde{C}'_2(x) \leq \sigma^2 (1+\etaplus)$ 
since $B \leq B'$, 
hence Eq.~\eqref{eq.thm.OLS.with-bias.Cgrd-Dpt}. 
\end{proof}
Theorem~\ref{thm.OLS.with-bias} can also be generalized to sub-Gaussian noise, 
following Remark~\refsurvey{1} in Section~\refsurvey{2.5}. 
Note that an equivalent of Eq.~\refsurvey{(15)} ---with an additional term depending on $B$--- 
could certainly be obtained under the assumptions of Theorem~\ref{thm.OLS.with-bias}. 

Assume that $D_{m_1} > \frac{10}{17} \max_{m \in \M} D_m\,$; 
for instance, this assumption holds true when the models are nested 
since one can always take $m_1 \in \argmax_{m \in \M} D_m\,$. 
Then, 
\[ 
\frac{9 D_{m_1}}{ 10 } - \frac{ D_{m_1} }{10} > \max_{m \in \M} \sets{ D_m } - \frac{9 D_{m_1}}{ 10 } \, , 
\] 
so Theorem~\ref{thm.OLS.with-bias} shows that $\Chwindow(\eta)$ 
with $\eta = \max\sets{\etamoins,\etaplus}$ is close to $\sigma^2$; 
a formal statement can be obtained, 
similarly to Proposition~\refsurvey{3} in Section~\refsurvey{6.1}. 

Theorem~\ref{thm.OLS.with-bias} shows one possible generalization of Theorem~\refsurvey{1}, 
that allows all models to be biased ($B>0$). 
The particular case $\Pi_{m_1}=\Id_n$ ---hence $B=0$ and $D_{m_1}=n$--- yields 
Theorem~\refsurvey{1}, up to numerical constants\footnote{We can even recover the same constants as in Theorem~\refsurvey{1} by using Eq.~\eqref{eq.pr.thm.OLS.with-bias.Cpt-Dpt.3} 
in the proof of Theorem~\ref{thm.OLS.with-bias}.}. 
When $B>0$ and $D_{m_1}<n$, $\etamoins$ and $\etaplus$ are slightly larger 
than in Theorem~\refsurvey{1} ---hence the estimation of $\sigma^2$ by the slope heuristics 
could be less precise---, 
but this loss is at most by a multiplicative factor of order 
\[ 
\frac{n}{D_{m_1}} 
\max \setj{ 1 \, , \, \sqrt{\frac{B'}{\sigma^2} } } 
\, . 
\]
For instance, if $\sigma^2$ stays constant while 
$\M=\M_n\,$, $m_1 = m_{1,n}\,$, $B=B_n$ and $B'=B'_n$ are such that 
\[ 
D_{m_{1,n}} \gg \sqrt{ n \log(n) } \, , 
\qquad 
\frac{B'_n}{\sigma^2} \in \mathcal{O}(1) 
\qquad 
\text{and} \qquad 
B'_n - B_n  \ll \frac{\sigma^2 D_{m_{1,n}}}{n} 
\]
as $n$ tends to infinity, 
then 
\[ 
\lim_{n \to +\infty} \max\sets{ \etamoins,\etaplus } = 0 
\, , 
\] 
hence the slope heuristics yields a consistent estimation of~$\sigma^2$. 

Therefore, contrary to what is suggested at the end of Section~\refsurvey{5.2.2} and in Section~\refsurvey{8.2}, 
having a model with a small approximation error in $(S_m)_{m \in \M}$ 
is not an ``unavoidable assumption'' for the slope heuristics to work. 
Could it be relaxed even more\footnote{Following Gilles Celeux's footnote, having jumped onto the Moon, I now hope that some researchers will be able to explore Mars.} than what is done by Theorem~\ref{thm.OLS.with-bias} above?

\section{Related procedures (Section~\refsurvey{6})}
\label{R.sec.related}
\subsection{Elbow heuristics (Section~\refsurvey{6.4})}

Some \emph{elbow-heuristics} procedure is present in the simulation experiments of Servane Gey 
---under the name $\widehat{m}_{plateau}$---, 
for the calibration of CART in binary classification. 
I notice that it performs decently, but a ``jump'' formulation of the slope heuristics 
seems to outperform it, at least in the four settings considered. 

\subsection{Scree test and related methods (Section~\refsurvey{6.5})}
\label{R.sec.related.scree}
David Donoho and Matan Gavish provide an interesting insight into the \emph{scree test} 
for choosing the number of factors in factor analysis, in the light of random-matrix theory. 
Their assessment of several eigenvalue-thresholding procedures points out 
several important ideas and questions. 

Universal singular value thresholding \citep[USVT;][]{Cha:2015} is another procedure 
related to minimal-penalty algorithms, not mentioned in the survey. 
In the setting considered by David Donoho and Matan Gavish, 
%%with the notation of Section~\refsurvey{6.6}, 
USVT suggests to take a threshold $\lambda$ 
``just above'' the minimal threshold $\lambda_{\min} = 2 \sqrt{n} \sigma$, 
which is suboptimal since \citet{Gav_Don:2014} show that 
the optimal threshold is $(4/\sqrt{3}) \sqrt{n} \sigma = (2/\sqrt{3}) \lambda_{\min}\,$. 
As in the slope heuristics, there is here a factor $2/\sqrt{3} \approx 1.15 > 1$ between 
the minimal and optimal values of the tuning parameter 
(the constant $C$ in front of the penalty for 
the slope heuristics, the threshold $\lambda$ here). 
Therefore, I consider these results as perfectly consistent with what is known about 
minimal-penalty algorithms. 

A remarkable difference with the slope heuristics is that this factor is not equal to~$2$, 
and it is not universal: 
in the white-noise model, it depends on the aspect ratio $m/n$ of the matrix, 
and in general it depends on the noise distribution. 
%
%%If a similar phenomenon happens in another setting\footnote{I think, for instance, of the case of large collections of models; see Section~\ref{R.sec.openpb.large} below. }, 
%%with a relationship between the minimal and optimal penalties that is not universal 
%%---and possibly dependent on the noise distribution---, 
%%then a minimal-penalty algorithm would be difficult to apply without prior knowledge. 
%
From the practical point of view, 
as detailed in Section~\ref{R.sec.openpb.phase-transitions}, 
a universal relationship between the minimal threshold and 
a good (but suboptimal) threshold ---such as USVT--- would be useful 
when the noise distribution is unknown, 
unless a fully data-driven optimal procedure is available. 

Note finally that Sylvain Sardy points out another good competitor of the scree test: 
quantile universal thresholding \citep[QUT;][]{Gia_etal:2017}.

\section{Practical aspects of minimal-penalty algorithms (Section~\refsurvey{7})}
\label{R.sec.practice}

\'Emilie Devijver judiciously underlines the need for theoretical analyses of 
minimal-penalty algorithms \emph{exactly as they are used}, 
which can differ from the general-purpose 
algorithms defined in Sections~\refsurvey{2}--\refsurvey{3}. 
This raises several theoretical open problems that are missing in the survey, 
such as theoretical grounds for the nested minimal-penalty algorithm (Section~\refsurvey{7.3}), 
or for choosing a penalty shape (Sections~\refsurvey{7.4} and \refsurvey{7.6}; see also Section~\ref{R.sec.practice.choice} below).

\subsection{Definitions of \texorpdfstring{$\widehat{C}$}{Chat} (Section~\refsurvey{7.1})} 
\label{R.sec.practice.def}

The discussions emphasize that choosing between $\Chjumpgal$ and $\Chslope$ is not an easy task: 
Gilles Celeux argues for $\Chslope$ ---thinking especially of a collection of nested models---, 
while \'Emilie Lebarbier argues for $\Chjumpgal$ for change-point detection 
---given that both $\Chjumpgal$ and $\Chslope$ have drawbacks in that framework, 
but the ones of $\Chjumpgal$ are easier to handle---, 
and \'Emilie Devijver suggests to visualize both $\Chjumpgal$ and $\Chslope$ when $(S_m)_{m \in \M}$ 
is a union of subcollections! 
Overall, this confirms that one should not use a single definition of $\Ch$ blindly, 
especially when using it for the first time in a given framework.

Servane Gey uses a variant of $\Chjumpgal$ 
---called $\mh_{jump}$ in her contribution--- 
which is not presented in the survey: 
it is a nice alternative to $\Chwindow$ for merging very close jumps.

Finally, in frameworks where both the jump and slope approaches have important drawbacks 
---for instance, change-point detection---, 
other representations of the empirical risk could be useful for estimating the minimal penalty. 
I mention this as an open problem, 
for which some inspiration might come from the suggest of David Donoho and Matan Gavish 
to use a histogram instead of a scree plot. 
In their setting, this suggest comes from modern random-matrix theory; 
for minimal-penalty estimation, existing and future probabilistic results might suggest other meaningful representations 
of the empirical risk.

\subsection{Choice of a penalty shape (Sections~\refsurvey{7.4} and \refsurvey{7.6})} 
\label{R.sec.practice.choice}
The question of choosing a penalty shape, for instance between a simple and a complex one, 
is raised by several discussants. 
Bertrand Michel provides several examples where theory provides ``complex'' penalty shapes 
---that is, depending on several unknown constants---, which makes their data-driven calibration 
difficult. 
Then, one may consider instead a ``simplified'' shape ---depending only on one unknown constant---, 
but it is often not obvious that such a simplification improves the performance 
compared to the multivariate extension of the slope approach. 
Section~\refsurvey{7.4} reports numerical experiments illustrating the difficulty of this choice, 
and the numerical experiments of \'Emilie Lebarbier for change-point detection 
provide another example where both options face practical issues. 

Then, two strategies are proposed by the discussants. 
On the one hand, \'Emilie Devijver shows that a precise enough mathematical study 
---with upper \emph{and} lower risk bounds--- 
can help answering the question. 
On the other hand, Bertrand Michel asks for a data-driven procedure choosing among several penalty shapes 
---which is another (elaborate) estimator selection problem---, \emph{with theoretical grounds}. 
Both options raise challenging mathematical open problems, 
and I think that solving any of them would have a significant practical impact. 

The second strategy could be solved thanks to minimal-penalty algorithms, 
because, as written by Bertrand Michel, ``it is very easy to detect when the method goes wrong''. 
This suggests an easy way of discarding a candidate penalty shape\footnote{We implicitly assume here that 
the slope heuristics is valid, or at least that the minimal and optimal penalties are proportional. 
Otherwise, as in the setting of Section~\refsurvey{3.3}, a data-driven choice of the 
optimal-penalty shape seems to be too difficult a problem. }. 
Nevertheless, Jean-Patrick Baudry shows that \emph{validating} a penalty shape is a more difficult issue: 
according to some ongoing research about model-based clustering, 
the slope heuristics can fail ---by underfitting--- even when the empirical risk 
has a clear linear behaviour! 
This can be an issue about which practicioners using minimal-penalty algorithms 
should be warned. 
Fortunately, minimal-penalty algorithms cannot strongly overfit by construction, 
so this issue should only have a moderate impact on the risk, except in pathological settings.

\section{Conjectures and open problems on minimal-penalty algorithms (Sections~\refsurvey{8.3} and \refsurvey{8.5})}
\label{R.sec.openpb}

\subsection{Need for theory with various risk functions} 
\label{R.sec.openpb.need-theory}

Yannick Baraud clearly shows that the least-squares risk 
---which is considered by most theoretical results about minimal-penalty algorithms--- 
can be a bad choice, except in the Gaussian framework; 
Lucien Birg\'e also argues in this direction. 
This points out the strong need for theoretical results on minimal-penalty algorithms (and beyond) 
with various risk functions. 
Let me explain why. 

Following \citet{Bre:2001:2cultures}, two main points of view exist in statistical modeling. 
On the one hand, one can assume a stochastic data model and aim at estimating its parameters, in order to get an 
estimation of the full data-generation mechanism. 
With such a goal, the performance measure of any procedure should reflect the quality of this estimation. 
As a consequence, as explained by Yannick Baraud and Lucien Birg\'e, 
the Hellinger or total-variation distances between probability distributions should be used, 
while the least-squares risk can be misleading for non-Gaussian data in the regression framework. 

On the other hand, one can consider the data-generation mechanism as a black box 
and stick to predictive accuracy (with respect to some application-dependent risk function) 
for evaluating the performance of a statistical procedure. 
This can be seen as the ``machine-learning'' point of view, as opposed to the previous one which could be called ``statistical''. 

In both cases, the key observation is that one should use a performance measure 
---that is, a risk function, in the setting of Section~\refsurvey{3.1}--- 
that is \emph{adapted to the final goal}. 
If one wants to model precisely the full data-generation mechanism, 
the risk should be a distance between probability distributions (Hellinger or total variation, for instance). 
If the final goal is to build an efficient predictor, 
one should use a prediction risk with an appropriate cost function 
(such as the quadratic, absolute value, hinge, logistic or exponential loss in binary classification). 
And other goals are possible, depending on the task to be solved 
(for instance, estimating a specific part of the data-generation mechanism, as in semi-parametric statistics). 

On the contrary, choosing a risk function because it is the only one for which theoretical results are available 
is not satisfactory at all. 
So, there must be a strong effort by theoreticians to provide mathematical guarantees for various performance measures, 
adapted to practical needs. 
In particular, in addition to the open problems already mentioned in the survey, 
an important open problem would be to provide minimal-penalty algorithms 
with theoretical guarantees in terms of Hellinger or total-variation distance. 
Lucien Birg\'e points out a more specific interesting open problem 
that is related to the above question: 
could a minimal-penalty algorithm 
(or an observable phase transition, as suggested in Section~\refsurvey{8.5}) 
be used for the fine tuning of the penalty appearing inside $\rho$-estimators 
\citep{Bar_Bir_Sar:2017,Bar_Bir:2018}? 

Up to now, the most advanced results about minimal-penalty algorithms with general risk functions 
are the ones of \citet[Chapters~7--8]{Sau:2010:phd} 
on minimum-contrast estimators with a regular contrast and the associated risk. 
In the discussion, Adrien Saumard proposes an approach for 0--1 classification that might work more generally for bounded M-estimation, 
hence leading to further improvements in this direction. 
Nevertheless, obtaining results for the Hellinger and total-variation distances requires new ideas, 
since these risk functions are not the expectation of a contrast. 

\subsection{Time-series analysis} 
I would like to thank Adrien Saumard for pointing out an important open problem, 
which is to develop minimal-penalty algorithms for time-series analysis, 
for which cross-validation and resampling procedures can be difficult to use for estimator selection. 
A subsection on this topic was indeed missing in Section~\refsurvey{8.3}. 

In addition to the references provided by Adrien Saumard 
and a few results mentioned in the survey that deal with dependent data 
\citep{Ler:2010:mixing,Gar_Ler:2011}, 
a framework in which theoretical results can be obtained with dependent data 
is the one of Section~\refsurvey{2} with assumption \eqref{hyp.thm.OLS.Gauss} 
replaced by $\varepsilon \sim \mathcal{N}(0,\Sigma)$ 
for some general covariance matrix~$\Sigma$. 
We can always write $\Sigma = A^{\top} A$ with $A \in \mathcal{M}_n(\R)$ symmetric, 
so that $\varepsilon = A \xi$ with $\xi \sim \mathcal{N}(0,\Id_n)$. 
Then, by Eq.~\refsurvey{(5)}--\refsurvey{(6)}, 
we obtain the expectations of the risk and empirical risk: 
\begin{align}
\notag %\label{eq.EriskFhm.Sigma}
\E\croch{\frac{1}{n} \norm{ \Fhm - F }^2} 
&= \frac{1}{n} \normb{ (\Id_n - \Pi_m) F}^2 + \frac{\tr( \Sigma \Pi_m )}{n}  \, , 
\\
\notag %\label{eq.EriskempFhm.Sigma}
\text{and} \qquad 
\E\croch{\frac{1}{n} \norm{ \Fhm - Y }^2} 
&= \frac{1}{n} \normb{ (\Id_n - \Pi_m) F}^2 + \frac{\tr( \Sigma )}{n}  - \frac{\tr( \Sigma \Pi_m )}{n}  \, , 
\end{align}
so that 
\[ 
%%\label{eq.penid.Sigma}
\penopt^{\Sigma} (m) 
:= \penoptz^{\Sigma} (m) + \frac{\tr( \Sigma )}{n}
= \frac{ 2 \tr( \Sigma \Pi_m )} {n} 
\] 
is an optimal penalty, while 
\[ 
\penmin^{\Sigma}(m) = \frac{ \tr( \Sigma \Pi_m )} {n} 
\] 
is a minimal penalty. 
A remarkable consequence is that 
\[ 
\penopt^{\Sigma} (m) = 2 \penmin^{\Sigma}(m) 
\] 
holds true whatever the noise dependence structure! 
Furthermore, the proof of Theorem~\refsurvey{1} can certainly be extended to this setting, 
with 
\[ 
\mh(C) \in \argmin_{\mM} \setj{ \frac{1}{n} \norm{\Fhm - Y}^2 + C \frac{\tr( \Sigma \Pi_m )}{n} } 
\, , 
\] 
$D_m$ replaced by $\C_m = \tr(\Sigma \Pi_m)$, 
a phase transition at $C=1$, 
and an optimal oracle inequality for $C=2$. 
The key argument is to get an equivalent of Eq. \refsurvey{(23)}--\refsurvey{(24)} 
in the dependent case, which can follow from standard Gaussian concentration results 
\citep[Propositions 4 and~6]{Arl_Bac:2009:minikernel_long_v2} under appropriate assumptions on $\Sigma$ and $(\Pi_m)_{\mM}\,$. 
We leave this problem as an exercise to the reader. 
It seems reasonable to conjecture that the results of 
\citet{Arl_Bac:2009:minikernel_long_v2} on linear estimators can also be extended 
to dependent data, under appropriate conditions on $\Sigma$ and the matrices $(A_m)_{\mM}\,$. 

However, the above result provides theoretical grounds to a slope-heuristics algorithm 
that can be used \emph{only if} $\tr(\Sigma \Pi_m)$ is known 
(up to a multiplicative constant) for all $\mM$. 
Such an assumption is strong, but it might be satisfied in some specific cases, 
either exactly ---for instance for linear inverse problems with independent homoscedastic noise--- 
or approximately ---with assumptions on the dependence structure. 

Let us finally remark that the above arguments can also serve as a starting point for analyzing 
the heteroscedatic-regression setting mentioned in point~1 of the first 
paragraph of Section~\refsurvey{8.3.7}. 

\subsection{Supervised classification (Section~\refsurvey{8.3.1})} 

Servane Gey's contribution complements Section~\refsurvey{8.3.1} 
(and the end of Section~\refsurvey{8.3.7}) well, 
with some numerical experiments about the slope heuristics 
for tuning CART in binary classification. 
This should increase the motivation for working on this open problem, 
and Adrien Saumard actually provides us a well-motivated conjecture 
on this very topic.

\subsection{Model-based clustering (Section~\refsurvey{8.3.2})} 

Minimal penalties for clustering is an active field of research 
---for good reasons since cross-validation cannot directly be applied for clustering, as noticed by Adrien Saumard---, 
and the discussion reflects it. 

Christine Keribin provides a new example where the slope heuristics 
empirically works well for model-based clustering, despite the fact that all models are biased. 
I do not dare suggest as an additional open problem the validation of the slope heuristics in this setting 
---with a significant bias for all models---, 
given the lack of theory available for slope heuristics and model-based clustering. 

Two discussants show the difficulty of developing a well-founded and practical minimal-penalty algorithm for model-based clustering. 
Adrien Saumard points out some mathematical challenges that may explain the lack of theory in this framework. 
I hope his suggests will help solving the problem. 
Jean-Patrick Baudry reports that some extra-parameters of the algorithms used in practice for model-based clustering 
may induce some troubles for the slope heuristics, leading to underfitting. 

Jean-Patrick Baudry's discussion underlines a more general issue: 
when the (theoretical) estimators $\shm(D_n)$ cannot be computed exactly in practice, 
one should not study the estimator-selection problem among $(\shm)_{m \in \M}$ 
but the (more difficult) problem of choosing among the \emph{practical approximations} to these estimators.

\subsection{Large collections of models (Section~\refsurvey{8.3.4})} 
\label{R.sec.openpb.large}

Model selection among a ``large'' collection (using the terminology introduced in Section~\refsurvey{4.7}) 
appears at several places of the discussion. 

For change-point detection, \'Emilie Lebarbier provides numerical experiments 
that support the conjecture of problem (a) in Section~\refsurvey{8.3.4}, with $C^{\star} = \sigma^2$, 
which would be a useful result for reasons detailed in Section~\refsurvey{8.3.4}. 

For orthonormal variable selection by hard thresholding, Sylvain Sardy suggests that 
quantile universal thresholding \citep[QUT;][]{Gia_etal:2017} 
can work well. 
I conjecture that minimal-penalty algorithms 
can also work well in such a setting with a large collection of models, 
because the richness of the collection makes the minimal penalty much larger, 
as shown by partial theoretical results and numerical experiments 
reported in Sections~\refsurvey{4.7} and~\refsurvey{8.3.4}). 

\medbreak 

Nevertheless, a major difficulty for the analysis and the practical use of minimal penalties with large collections of models 
is that the minimal and optimal penalties certainly depend strongly on the noise distribution, 
which should be added to the conjectures formulated in Section~\refsurvey{8.3.4}. 
This is explained well by Lucien Birg\'e, 
who points out the role of large-deviation arguments for the analysis of minimal and optimal penalties in this context. 
David Donoho and Matan Gavish show that a similar phenomenon appears in a slightly different context (factor analysis), 
as detailed above in Section~\ref{R.sec.related.scree}. 
From the practical point of view, the key problem is that minimal-penalty algorithms 
can only be used if the relationship between the minimal and optimal penalty is known 
\emph{before} observing the data. 
In particular, if this relationship depends on the (unknown) noise distribution,  
and if it cannot be easily estimated from the data, 
then minimal-penalty algorithms are useless. 
In such cases, the only hope would be to have a universal relationship between the minimal penalty 
and a suboptimal (but good) penalty, that is, a penalty satisfying an oracle inequality with a bounded leading constant. 
Hopefully, this seems to be possible in the settings respectively mentioned 
by David Donoho and Matan Gavish 
and by Lucien Birg\'e.

\subsection{Infinite estimator collections (Section~\refsurvey{8.3.5})} 

Pierre Bellec shows the noticeable fact that some infinite estimator collections 
---ordered linear smoothers--- can be tuned optimally 
---with an oracle inequality with leading constant \emph{equal to}~$1$ and a remainder term of order $\sigma^2/n$--- 
by an aggregation procedure \citep{Bel_Yan:2019}. 
This phenomenon can be related to the result by 
\citet{Arl_Bac:2009:minikernel_long_v2} on kernel ridge regression, 
that is mentioned in Section~\refsurvey{8.3.5}. 
Let me nevertheless point out three important differences. 
First, the result by \citet{Bel_Yan:2019} makes a milder assumption on the estimator collection: 
kernel ridge regressors form a family of ordered linear smoothers, but other examples exist \citep[see for instance][Section~2]{Kne:1994}. 
Second, the result by \citet{Bel_Yan:2019} does not rely on some explicit discretization of the estimator collection 
---chaining arguments make such a discretization only implicitly---, 
hence it could be easier to generalize further, for instance to multivariate families of kernel ridge regressors. 
Third, the key deviation bound proved by \citet{Bel_Yan:2019} includes a non-negative term 
$c \lVert (A_{\lambda} - A^*) \rVert^2$, 
while generalizing the results of \citet{Arl_Bac:2009:minikernel_long_v2} 
would require a similar uniform deviation bound \emph{without} this term. 

The first two differences make the result of \citet{Bel_Yan:2019} quite interesting for generalizing the analysis of 
minimal-penalty algorithms, 
but the third difference make me join Pierre Bellec's interrogative statement at the end of his discussion. 
At least, this should motivate further work on the topic. 

At a broader level, this underlines that analyzing penalization and aggregation procedures 
require similar concentration inequalities. 
Therefore, solving the open problems mentioned in Section~\refsurvey{8.6} would have 
consequences far beyond minimal-penalty algorithms, 
including the theoretical analysis of aggregation procedures such as Q-aggregation.

\subsection{Phase transitions (Section~\refsurvey{8.5})} 
\label{R.sec.openpb.phase-transitions}

As mentioned in Section~\ref{R.sec.openpb.need-theory}, Lucien Birg\'e suggests to consider the problem of tuning  
$\rho$-estimators thanks to some kind of minimal penalty. 
This open problem might share some common points with the use of phase transitions for tuning 
Goldenshluger-Lepski's and related procedures (Section~\refsurvey{8.5.1}), 
because both are based on pairwise comparisons. 
Note however that the theoretical results mentioned in Section~\refsurvey{8.5.1} are for the least-squares risk, 
while the tuning of $\rho$-estimators should be done for the Hellinger distance, 
which increases the difficulty of the challenge. 

The use of phase transitions for tuning thresholding procedures (Section~\refsurvey{8.5.2})
might also be related to the setting of David Donoho and Matan Gavish's discussion 
(see also the comments made in Sections \ref{R.sec.related.scree} and~\ref{R.sec.openpb.large} above). 
In particular, I notice that the minimal and optimal thresholds depend on the noise distribution  
---through a multiplicative factor $\sigma$ in the Gaussian case---, 
which is a problem if this distribution is not known. 
If the minimal threshold can be estimated from data 
---for instance, with an histogram representation, as suggested by David Donoho and Matan Gavish---, 
it will provide valuable information about the noise distribution, 
which can then be used for computing a (near-)optimal data-driven threshold. 
At least, in the Gaussian case, such a procedure could be used for estimating $\sigma$, 
even if other strategies exist \citep[Section~III.E]{Gav_Don:2014}. 

\section{Overpenalization (Section~\refsurvey{8.4})}

The overpenalization problem appears at least at two places in the discussion. 

Sylvain Sardy shows that the need for overpenalization can occur for tuning the adaptive Lasso, 
because of the high variance of the unbiased estimate of the risk. 
He suggests quantile universal thresholding \citep[QUT;][]{Gia_etal:2017} 
for solving the issue, 
which adds a lead to the resolution of the open problem of 
data-driven overpenalization mentioned in Section~\refsurvey{8.4}. 

Adrien Saumard provides arguments against the fact that minimal-penalty algorithms 
could sharply overpenalize in general. 
This is an important addition to Section~\refsurvey{8.4}, 
in which I may have been too optimistic 
about the power and versatility of minimal-penalty algorithms. 
I am pleased to see that several of us are considering overpenalization as an important issue, 
and I encourage other researchers to join us on this subject 
and explore other ideas to solve it, well beyond minimal penalties.

\bibliography{survey_penmin}

\begin{thebibliography}{}

\bibitem[Arlot, 2019]{Arl:2019:surveypenmin:journal}
Arlot, S. (2019).
\newblock Minimal penalties and the slope heuristics: a survey.
\newblock {\em Journal de la SFdS}.
\newblock arXiv:1901.07277.

\bibitem[Arlot and Bach, 2011]{Arl_Bac:2009:minikernel_long_v2}
Arlot, S. and Bach, F. (2011).
\newblock Data-driven calibration of linear estimators with minimal penalties.
\newblock arXiv:0909.1884v2.

\bibitem[Baraud and Birg\'{e}, 2018]{Bar_Bir:2018}
Baraud, Y. and Birg\'{e}, L. (2018).
\newblock Rho-estimators revisited: general theory and applications.
\newblock {\em Ann. Statist.}, 46(6B):3767--3804.

\bibitem[Baraud et~al., 2017]{Bar_Bir_Sar:2017}
Baraud, Y., Birg\'{e}, L., and Sart, M. (2017).
\newblock A new method for estimation and model selection: {$\rho$}-estimation.
\newblock {\em Invent. Math.}, 207(2):425--517.

\bibitem[Bellec and Yang, 2019]{Bel_Yan:2019}
Bellec, P.~C. and Yang, D. (2019).
\newblock The cost-free nature of optimally tuning {T}ikhonov regularizers and
  other ordered smoothers.
\newblock arXiv:1905.12517v1.

\bibitem[Breiman, 2001]{Bre:2001:2cultures}
Breiman, L. (2001).
\newblock Statistical modeling: The two cultures (with comments and a rejoinder
  by the author).
\newblock {\em Statist. Sci.}, 16(3):199--231.

\bibitem[Chatterjee, 2015]{Cha:2015}
Chatterjee, S. (2015).
\newblock Matrix estimation by universal singular value thresholding.
\newblock {\em Ann. Statist.}, 43(1):177--214.

\bibitem[Garivier and Lerasle, 2011]{Gar_Ler:2011}
Garivier, A. and Lerasle, M. (2011).
\newblock Oracle approach and slope heuristic in context tree estimation.
\newblock arXiv:1111.2191v1.

\bibitem[Gavish and Donoho, 2014]{Gav_Don:2014}
Gavish, M. and Donoho, D.~L. (2014).
\newblock The optimal hard threshold for singular values is $4/\sqrt{3}$.
\newblock {\em IEEE Trans. Inform. Theory}, 60(8):5040--5053.

\bibitem[Giacobino et~al., 2017]{Gia_etal:2017}
Giacobino, C., Sardy, S., Diaz-Rodriguez, J., and Hengartner, N. (2017).
\newblock Quantile universal threshold.
\newblock {\em Electron. J. Statist.}, 11(2):4701--4722.

\bibitem[Kneip, 1994]{Kne:1994}
Kneip, A. (1994).
\newblock Ordered linear smoothers.
\newblock {\em Ann. Statist.}, 22(2):835--866.

\bibitem[Lerasle, 2011]{Ler:2010:mixing}
Lerasle, M. (2011).
\newblock Optimal model selection for stationary data under various mixing
  conditions.
\newblock {\em Ann. Statist.}, 39(4):1852--1877.

\bibitem[Saumard, 2010]{Sau:2010:phd}
Saumard, A. (2010).
\newblock {\em Estimation par Minimum de Contraste R\'egulier et Heuristique de
  Pente en S\'election de Mod\`eles}.
\newblock PhD thesis, Universit\'e de Rennes 1.
\newblock Available at \url{http://tel.archives-ouvertes.fr/tel-00569372v1}.

\end{thebibliography}

\end{document}